\documentclass[11pt]{article}

\usepackage[T1]{fontenc}
\usepackage[utf8]{inputenc}
\usepackage[margin=0.99in]{geometry}
\usepackage{amsmath,amssymb,amsfonts,amsthm,mathtools}
\usepackage{mathrsfs}
\usepackage{enumitem}
\usepackage[colorlinks=true,linkcolor=blue,citecolor=blue,urlcolor=blue]{hyperref}

\def\0{{\bf 0}}

\def\d{{\mathrm d}}

\def\q{{\mathbf q}}
\def\s{{\mathbf s}}

\def\R{{\mathbb R}}

\def\C{{\mathbb C}}
\def\u{{\bf u}}

\def\x{{\bf x}}
\def\y{{\bf y}}

\newcommand{\Om}{\Omega}
\newcommand{\lam}{\lambda}
\newcommand{\mcH}{\mathcal H}
\newcommand{\mcC}{\mathcal C}

\newtheorem{theorem}{Theorem}[section]
\newtheorem{proposition}[theorem]{Proposition}

\theoremstyle{definition}
\newtheorem{definition}[theorem]{Definition}
\theoremstyle{remark}
\newtheorem{remark}[theorem]{Remark}

\begin{document}

\title{A Note on Singular Boundary Regularisation in Hamiltonian Systems}

\author{Cristina Stoica\\
Wilfrid Laurier University, Canada\\
Email: cstoica@wlu.ca}

\maketitle

\begin{abstract}
Singular changes of variables in Hamiltonian systems, such as McGehee coordinates in celestial
mechanics or renormalised variables in dispersive PDE blow-up, are designed
to extend the equations of motion to a singular boundary.  In contrast, it may be that near the boundary, the  the induced symplectic  two-form may rescale distinct geometric directions by distinct powers of the singular
scale, and if the leading weighted part is degenerate, no single conformal
factor makes the form extend as a smooth non-degenerate two-form. 
 This anisotropic obstruction is complementary to the isotropic singularities
studied in $b^m$-symplectic geometry.  We record it in an elementary
finite-dimensional criterion and illustrate it with McGehee regularisation
of homogeneous central-force collisions and, formally, with the modulation
geometry of focusing nonlinear Schr\"odinger equation blow-up.
\end{abstract}

\medskip
\textbf{Keywords}: singular Hamiltonian, regularisation, blow-up, scaling, anisotropic singularity
\section{Introduction}

Hamiltonian systems may develop singular behaviour even when their equations are smooth on their (open) phase space.  In finite-dimensional mechanics, collision trajectories may reach a singular subset in finite time.  In Hamiltonian dispersive equations, solutions may concentrate at a spatial point and blow up in finite time.  In both these settings one often introduces a singular scale and a new time variable in order to describe the limiting dynamics.

The classical example is McGehee's blow-up of collision in celestial mechanics \cite{McGehee1974,McGehee1981, Stoica2000}.  A collision point is replaced by a smooth boundary, \emph{the collision manifold}, and a singular time change is used to obtain a vector field that extends to this boundary.  The extended boundary dynamics is fictitious, but by continuity with respect to initial data, it yields useful information about the behaviour near collision. 
A related example appears in Hamiltonian PDE blow-up.  For the focusing nonlinear Schr\"odinger equation, in order to regularise the profile dynamics one introduces a concentrating scale, a rescaled spatial coordinate, and a rescaled time  \cite{Merle1993,MerleRaphael2005}. 



To clarify, throughout the term \emph{singular boundary} is used in the
following sense: we consider Hamiltonian systems smooth on an open
phase-space manifold, but with incomplete flows.  Thus some solutions
leave every compact subset in finite time, as at collision or
concentration.  A singular regularisation identifies the phase space, or
the relevant part of it, with  a manifold with boundary,
so that the incomplete orbits accumulate on the boundary.  This boundary
is attached to the phase space via a blow-up (re-scaling) procedure, as
in the case of the McGehee regularisation or the NLS renormalisation.

The fact that classical regularising transformations distort the symplectic structure is, by itself, well documented, and in recent years it has become the starting point of an active programme in singular symplectic geometry.  Guillemin, Miranda and Pires \cite{GuilleminMirandaPires2014} developed the geometry of \textit{$b$-symplectic (or log-symplectic) manifolds}, in which the symplectic form degenerates in a controlled way along a critical hypersurface; Scott \cite{Scott2016} extended this framework to $b^m$-symplectic structures, allowing higher-order poles along the hypersurface.  Delshams, Kiesenhofer and Miranda \cite{DelshamsKiesenhoferMiranda2017} showed that such structures arise naturally in celestial mechanics: regularisation transformations, Appell's transformation, and classical changes of coordinates such as McGehee's lead to two-forms that blow up, or drop rank, along a hypersurface, and are modelled by $b^m$-symplectic or folded symplectic structures.   In particular, Miranda and Planas  \cite{MirandaPlanas2022} used  McGehee coordinates \emph{at collision} to examine the double-collision problem with an attractive homogeneous
potential of degree $(-\alpha)$, obtaining a $b$-symplectic structure precisely for the exponent $\alpha=2$.
%

 In all the aforementioned  situations the singularity of the two-form is \emph{isotropic}, that is, it is governed by a single power of a defining function of the critical hypersurface. 
 Thus, after division by that power, the two-form extends as a smooth non-degenerate section of a suitably rescaled bundle.
%
%
%
%
The present note concerns the complementary, \emph{anisotropic} situation.  Near the singular boundary, the induced two-form may split into several independent components carrying \emph{distinct} powers of the singular scale.  Then no single conformal factor, or equivalently, no single rescaling by a power of the boundary defining function, can regularise all components at once.  If the leading weighted part is degenerate, no such rescaling produces a smooth non-degenerate limiting form, and the boundary structure is genuinely degenerate.  Proposition~\ref{prop:anisotropic-obstruction} records this elementary obstruction.  McGehee coordinates at collision for attractive homogeneous potentials with exponent $\alpha\neq 2$ exhibit it already in the radial--shape splitting, and, at least formally, the full modulation geometry of focusing NLS blow-up exhibits it as well.

Here we do not attempt to build a theory of singular Hamiltonian compactifications.  Rather, we isolate a simple obstruction to conformal symplectic regularity at a singular boundary and illustrate it in two model settings: the well-known McGehee regularisation at collisions in celestial mechanics and the infinite-dimensional setting of focusing NLS.  Other related examples include anisotropic Kepler and Manev-type problems \cite{Devaney1978,Craig1999}, point vortex collapse and vortex-filament models \cite{Hiraoka2009,PaparellaPortaluri2011}, Hamiltonian PDEs such as the generalized KdV equation \cite{MartelMerle2002,MartelMerleRaphael2013}.

We also point out, in elementary terms, why singular regularisations of Hamiltonian systems may produce boundary dynamics, such as the gradient-like flow on McGehee's collision manifold, that is not Hamiltonian.
In general, regularising the vector field near a singular boundary need not regularise the Hamiltonian geometry. 
One frequently retains a presymplectic structure, or a degenerate Poisson structure, and in the isotropic case one retains a $b^m$-symplectic or folded structure \cite{DelshamsKiesenhoferMiranda2017}.  However,  when the boundary weights are anisotropic and the leading weighted part is degenerate, no conformal rescaling restores non-degeneracy.  

The note is organised as follows.  Section~\ref{sec:time-changes} recalls classical facts about time changes and conformal rescalings of symplectic forms.  Section~\ref{sec:criterion} formulates the finite-dimensional anisotropic scaling criterion.  Section~\ref{sect: examples} applies the criterion to two examples: the McGehee regularisation for homogeneous attractive central problems, and a formal infinite-dimensional analogue for focusing NLS blow-up.  We end with concluding remarks, including brief comments on the multisymplectic setting. 

\section{Time changes and conformal symplecticity}\label{sec:time-changes}

In this section we collect some classical facts; see, e.g., \cite{AbrahamMarsden1978,Arnold1989} for background.  Let $(M,\omega)$ be a symplectic manifold and let $H:M\to\R$ be a smooth Hamiltonian.  The Hamiltonian vector field $X_H$ is determined by $ i_{X_H}\omega=\d H.$
Let $g:M\to(0,\infty)$ be a smooth positive function and define $  Y=gX_H.$
The vector fields $X_H$ and $Y$ have the same oriented trajectories, but their parametrisations differ.  Equivalently, if $t$ is the Hamiltonian time and $\tau$ the new time, then
\[
        \frac{\d t}{\d\tau}=g.
\]

The following elementary fact is often useful.

\begin{proposition}\label{prop:time-change-ham}
Let $(M,\omega)$ be symplectic and let $Y=gX_H$, with $g>0$.  Then $Y$ is locally Hamiltonian with respect to the same symplectic form $\omega$ if and only if \[ \d g\wedge \d H=0.\]
In particular, on a region where the regular level sets of $H$ are connected, this condition implies that $g$ is locally a function of $H$ alone.  In that case there exists a function $K=F(H)$ such that 
\[
        X_K=gX_H.
\]
\end{proposition}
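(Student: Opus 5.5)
The plan is to reduce the statement to Cartan's formula. A vector field $Y$ is locally Hamiltonian with respect to $\omega$ exactly when the one-form $i_Y\omega$ is closed; by the Poincaré lemma it is then locally exact. Since $i_Y\omega = g\,i_{X_H}\omega = g\,\d H$, the whole question is when $g\,\d H$ is closed. We compute
\[
\d(g\,\d H)=\d g\wedge \d H,
\]
using $\d\d H=0$. Hence $\mathcal L_Y\omega=\d i_Y\omega=\d g\wedge\d H$, and $Y$ is locally Hamiltonian if and only if $\d g\wedge \d H=0$. This proves the first claim in both directions at once.

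For the second claim, work on the open set where $\d H\neq 0$. The condition $\d g\wedge\d H=0$ says that $\d g$ is pointwise proportional to $\d H$: $\d g=\lambda\,\d H$. Consequently $\d g$ annihilates $\ker \d H$, which is the tangent space to the level sets of $H$. So $g$ is constant along each connected component of a regular level set. If these level sets are connected, $g$ is constant on each level set. Therefore $g=f\circ H$ for a function $f$ defined on the (locally interval) image of $H$.

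Smoothness of $f$ comes from a submersion chart. Near a regular point, choose coordinates with $H$ as the first coordinate $y_1$. Then $g$ depends only on $y_1$, so $f$ is smooth locally; this is the precise sense of ``locally a function of $H$''. Finally, set $F$ to be an antiderivative of $f$ on an interval of regular values and $K=F(H)$. Then
\[
\d K=F'(H)\,\d H=g\,\d H=i_{gX_H}\omega,
\]
so by non-degeneracy of $\omega$ we get $X_K=gX_H$.

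The main obstacle is the passage from ``$g$ constant on connected level sets'' to ``$g$ is a smooth function of $H$''. This needs $\d H\neq0$ and some care with the topology of the level sets. I will handle it by restricting to a neighbourhood that is a union of connected regular level sets over an interval of values, where $H$ is a submersion. On such a region $f$ is well defined globally in the value variable, and its smoothness is local, given by the chart argument above. At critical points of $H$ the condition $\d g\wedge\d H=0$ is vacuous, so the conclusion is only claimed on regular regions, as the statement indicates.
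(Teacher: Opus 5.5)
Your proposal is correct and follows the paper's argument. Both reduce local Hamiltonicity to the closedness of $i_Y\omega=g\,\d H$, compute $\d(g\,\d H)=\d g\wedge\d H$, deduce that $g$ is constant on connected regular level sets, and take $K=F(H)$ with $F'=g$; your submersion-chart argument for the smoothness of $f$ just makes explicit a step the paper leaves implicit.
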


\begin{proof}
Since
$\displaystyle{
        i_Y\omega=i_{gX_H}\omega=g\,\d H,}$
the vector field $Y$ is locally Hamiltonian with respect to $\omega$ if and only if $g\,\d H$ is closed.  But
$\displaystyle{
        \d(g\,\d H)=\d g\wedge \d H.}
$
This proves the first claim.  If $\d H\neq0$ and $\d g\wedge\d H=0$, then $g$ is locally constant along the regular level sets of $H$.  If these level sets are connected, $g$ is locally a function of $H$.  Conversely, if $g=g(H)$, then $K=F(H)$, with $F'=g$, satisfies $\displaystyle{\d K=g(H)\,\d H}$
so $X_K=gX_H$.
\end{proof}

Thus a general time change preserves the orbit foliation but not necessarily the Hamiltonian structure.  A related calculation concerns conformal changes of the symplectic form.  Formally,
\[
        i_{gX_H}\left(\frac{1}{g}\omega\right)=\d H.
\]
However, $\displaystyle{\frac{1}{g}\,\omega}$ is usually not symplectic.

\begin{proposition}\label{prop:conformal-symplectic}
Let $(M,\omega)$ be a symplectic manifold of dimension $2n\geq4$.  If $g:M\to(0,\infty)$ is smooth, then $\displaystyle{\frac{1}{g}\,\omega}$ is closed if and only if $g$ is locally constant.
\end{proposition}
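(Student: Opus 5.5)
The plan is to compute the exterior derivative of $\frac{1}{g}\omega$ directly and use the non-degeneracy of $\omega$ in dimension at least four. Since $\d\omega=0$,
\[
\d\Big(\frac{1}{g}\,\omega\Big)=\d\Big(\frac1g\Big)\wedge\omega=-\frac{1}{g^2}\,\d g\wedge\omega .
\]
Hence $\frac1g\omega$ is closed if and only if $\d g\wedge\omega=0$. The reverse implication is immediate: if $g$ is locally constant then $\d g=0$, and the form is closed. The real work is the forward implication, and it reduces to a pointwise linear-algebra statement.

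\textbf{Key lemma.} On a symplectic vector space $(V,\omega)$ with $\dim V=2n\ge4$, the map
\[
\alpha\mapsto\alpha\wedge\omega, \qquad \Lambda^1V^*\to\Lambda^3V^*,
\]
is injective.

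To prove it, fix a point and choose a Darboux basis $e_1,f_1,\dots,e_n,f_n$ of $T_pM$ with dual coframe, so that $\omega=\sum_i e^i\wedge f^i$. Write $\alpha=\sum_j(a_je^j+b_jf^j)$ and suppose $\alpha\ne0$; say $a_1\neq0$ (the other cases are symmetric). Because $n\ge2$, there is an index $i\ne1$, and I will examine the coefficient of $e^1\wedge e^i\wedge f^i$ in $\alpha\wedge\omega$.

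\begin{itemize}
\item The only contribution comes from $a_1e^1\wedge(e^i\wedge f^i)$. The pair $e^i\wedge f^i$ can only come from the $i$-th summand of $\omega$, and the other products contain a different basis covector.
\item So this coefficient is $a_1\neq0$, which contradicts $\alpha\wedge\omega=0$.
\end{itemize}

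This is the step where $\dim\ge4$ enters: for $n=1$ every $3$-form vanishes, and the claim fails. That is consistent with the fact that any area form times a positive function is closed in dimension two.

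Applying the lemma pointwise with $\alpha=\d g_p$ gives $\d g=0$ everywhere, so $g$ is locally constant. An equivalent, coordinate-free route is to use the Lefschetz fact that $\wedge\omega:\Lambda^{1}\to\Lambda^{3}$ is injective when $n\ge2$. I would present the Darboux-basis check instead, since it keeps the note elementary.

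The only mild obstacle is the bookkeeping in the coefficient extraction: one must confirm that no other term of $\alpha\wedge\omega$ produces $e^1\wedge e^i\wedge f^i$. Once the indices are fixed as above, this is immediate.
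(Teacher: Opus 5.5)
Your proof is correct and has the same structure as the paper's: both reduce closedness to $\d g\wedge\omega=0$ and then use pointwise injectivity of wedging with $\omega$ on one-forms. The only difference is in that last step. The paper wedges further with $\omega^{n-2}$ and cites the isomorphism $\alpha\mapsto\alpha\wedge\omega^{n-1}$ onto $(2n-1)$-forms, whereas you verify injectivity into $3$-forms directly by a Darboux-basis coefficient check, which is self-contained and makes the role of $n\geq2$ explicit.
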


\begin{proof}
Since $\d\omega=0$,
\[
        \d \left(\frac{1}{g}\,\omega \right)=-\frac{1}{g^2} \d g\wedge\omega.
\]
Thus $\displaystyle{\frac{1}{g}\,\omega}$ is closed if and only if $\d g\wedge\omega=0$.  Wedging with $\omega^{n-2}$ gives
\[
        \d g\wedge\omega^{n-1}=0.
\]
The map $\alpha\mapsto \alpha\wedge\omega^{n-1}$ is an isomorphism from one-forms to $(2n-1)$-forms.  Hence $\d g=0$.
\end{proof}

\begin{remark}
The two-dimensional case is exceptional: every three-form vanishes, so multiplying an area form by a nonzero function again gives a closed two-form.  This low-dimensional fact does not persist in higher-dimensional Hamiltonian systems.
\end{remark}

\section{A finite-dimensional scaling criterion}\label{sec:criterion}

The preceding discussion assumes that $g$ is smooth and positive.  In singular regularisation, the factor $g$ may vanish or blow up at a boundary.  Then the rescaled vector field may extend smoothly to the boundary while the symplectic form, or its inverse Poisson tensor, becomes singular or degenerate.

\medskip

Let $\lam\geq0$ be a boundary defining function on a manifold with boundary $\widetilde M$, so the singular boundary is
\[
        \partial\widetilde M=\{\lam=0\}.
\]
On the interior $\lam>0$, suppose that a symplectic form $\omega$ is written in blown-up variables as a family of two-forms $\{\omega_\lam\}_{\lambda > 0}$.  We are interested in whether, after multiplication by a single scalar factor, the form can extend to a nondegenerate two-form at $\lam=0$.

\begin{definition}\label{def:admissible}
Fix $\lam_0>0$.  A function $f:(0,\lam_0)\to\R\setminus\{0\}$ is called an \emph{admissible conformal factor} if it has the form
\[
        f(\lam)=\frac{1}{\lam^{\tilde c}}\,g(\lam),
\]
where $\tilde c\in\R$ and $g$ is smooth and nonvanishing on $[0,\lam_0)$.  A family of two-forms $\{\omega_\lam\}_{\lambda > 0}$ is said to be \emph{conformally regularisable at $\lam=0$} if there exists an admissible conformal factor $f$ such that $f(\lam)\,\omega_\lam$, $\lam>0$, extends smoothly to $\lam=0$ as a two-form that is non-degenerate at every point of the boundary.
\end{definition}

\begin{remark}
The restriction to admissible factors excludes pathological nowhere-vanishing factors with no power-law leading behaviour, such as $\displaystyle{\frac{1}{\lam^{c}}\left(2+\sin\left(\frac{1}{\lam} \right)\right)}$.  All conformal factors that arise in the regularisations considered in this note (as well as  in the $b^m$-symplectic examples of \cite{DelshamsKiesenhoferMiranda2017}) are of this admissible power-law type.
\end{remark}

\begin{proposition}[Anisotropic scaling obstruction]\label{prop:anisotropic-obstruction}
Let $\omega_\lam$ be a family of symplectic forms on the interior of a manifold with boundary $\widetilde M$, and suppose that near $\lam=0$ it has an expansion
\[
        \omega_\lam
        =\sum_{j=1}^N \lam^{a_j}\omega_j
        +\d\lam\wedge\sum_{\ell=1}^M \lam^{b_\ell}\eta_\ell,
\]
where:
\begin{enumerate}[label=(\alph*)]
\item the exponents satisfy $a_1<a_2<\cdots<a_N$ and $b_1<b_2<\cdots<b_M$;
\item the two-forms $\omega_j$ and one-forms $\eta_\ell$ are smooth up to the boundary, independent of $\lam$, do not involve $\d\lam$, and none vanishes identically along $\partial\widetilde M$.
\end{enumerate}
Let $c=\min\{a_1,b_1\}$ be the minimal exponent, and let
\[
        \omega_0^{\mathrm{lead}}
        =\sum_{a_j=c}\omega_j
        +\d\lam\wedge\sum_{b_\ell=c}\eta_\ell
\]
be the leading weighted part.  If $\omega_0^{\mathrm{lead}}$ is degenerate at some point of the boundary $\partial\widetilde M$, then $\omega_\lam$ is not conformally regularisable at $\lam=0$ in the sense of Definition~\ref{def:admissible}.
\end{proposition}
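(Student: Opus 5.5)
Suppose for contradiction that an admissible factor $f(\lam)=\lam^{-\tilde c}g(\lam)$ exists such that $f(\lam)\,\omega_\lam$ extends smoothly to $\lam=0$ as a two-form that is non-degenerate along $\partial\widetilde M$. The plan is to determine $\tilde c$ by comparing powers of $\lam$, and then to show that the boundary limit must be $g(0)\,\omega_0^{\mathrm{lead}}$, which is degenerate at some boundary point. The expansion is used in collar coordinates $(\lam,x)$, where $x$ are coordinates on $\partial\widetilde M$. Since the $\omega_j$ and $\eta_\ell$ contain no $\d\lam$, the two kinds of terms are tangential two-forms and $\d\lam\wedge(\text{tangential one-form})$. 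The decomposition $\Lambda^2 T^*\widetilde M=\Lambda^2T^*\partial\widetilde M\oplus(\d\lam\wedge T^*\partial\widetilde M)$ is direct, so each summand can be analysed separately.

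\textbf{Step 1: $\tilde c\ge c$.} Consider $f\omega_\lam=g(\lam)\big(\sum_j\lam^{a_j-\tilde c}\omega_j+\d\lam\wedge\sum_\ell\lam^{b_\ell-\tilde c}\eta_\ell\big)$. Suppose first $c=a_1$. Choose a boundary point $p$ with $\omega_1(p)\ne0$. The tangential component along the segment $\lam\mapsto(\lam,x_p)$ is $g(\lam)\sum_j\lam^{a_j-\tilde c}\omega_j(\lam,x_p)$. As $\lam\to0$ this behaves like $g(0)\lam^{a_1-\tilde c}\omega_1(0,x_p)$, provided the higher terms are really of lower order. They are: the $\omega_j$ are smooth up to the boundary and the exponents are strictly increasing, so $\lam^{a_j-a_1}\omega_j\to0$ for $j>1$. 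Hence, if $\tilde c>c$, the tangential component blows up, contradicting smooth extension. The case $c=b_1$ is identical, using the $\d\lam$-component and a point where $\eta_1\ne0$. So $\tilde c\le c$ is forced.

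\textbf{Step 2: $\tilde c=c$.} If $\tilde c<c$, every exponent $a_j-\tilde c$ and $b_\ell-\tilde c$ is positive. Then every term tends to zero, the extension vanishes identically on $\partial\widetilde M$, and it is certainly degenerate, a contradiction. Hence $\tilde c=c$.

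\textbf{Step 3: the boundary limit.} With $\tilde c=c$, the terms with exponent $c$ survive, and every term with a strictly larger exponent tends to $0$ as $\lam\to0$, again by the boundedness of the smooth coefficients. The continuous extension to the boundary is therefore $g(0)\,\omega_0^{\mathrm{lead}}$ restricted to $\partial\widetilde M$. Since $g(0)\ne0$, this form is degenerate exactly where $\omega_0^{\mathrm{lead}}$ is, and by hypothesis that happens at some boundary point. This contradicts conformal regularisability.

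The main obstacle is interpreting ``extends smoothly'' in the presence of non-integer exponents. Terms such as $\lam^{a_j-c}$ with $0<a_j-c<1$ extend continuously but are not smooth. This does not hurt the argument: non-degeneracy is a pointwise condition at the boundary, so only the continuous limit is needed. If a smooth extension exists, its boundary value must coincide with the continuous limit computed in Step 3, and that limit is degenerate.
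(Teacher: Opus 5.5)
Your proof is correct and follows the paper's argument. Both use the same trichotomy: if $\tilde c>c$ a leading term blows up, if $\tilde c<c$ the limit is the zero form, and if $\tilde c=c$ the limit is $g(0)\,\omega_0^{\mathrm{lead}}$. Your domination estimate and your remark on smooth versus continuous extension just make explicit what the paper's ``no cancellation'' sentence leaves implicit. One small slip: the heading of Step 1 should read $\tilde c\le c$, which is what you actually prove.
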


\begin{proof}
Let $\displaystyle{f(\lam)=\frac{1}{\lam^{\tilde c}}g(\lam)}$ be an admissible conformal factor.  Then
\[
        f(\lam)\,\omega_\lam
        =g(\lam)\left(\sum_{j=1}^N \lam^{a_j-\tilde c}\omega_j
        +\d\lam\wedge\sum_{\ell=1}^M \lam^{b_\ell-\tilde c}\eta_\ell\right).
\]
Since the forms $\omega_j$ do not involve $\d\lam$, there is no cancellation between the two groups of terms, and since the exponents within each group are distinct, there is no cancellation within a group.

If $\tilde c>c$, the component with exponent $c-\tilde c<0$ is unbounded as $\lam\to0$ at points of the boundary where the corresponding form does not vanish, so $f(\lam)\,\omega_\lam$ does not extend smoothly to $\lam=0$.  If $\tilde c<c$, every component tends to zero, and the limit is the zero two-form, which is degenerate.  Hence the only admissible choice yielding a smooth nonzero limit is $\tilde c=c$, in which case
\[
        \lim_{\lam\to0}f(\lam)\,\omega_\lam
        =g(0)\,\omega_0^{\mathrm{lead}}.
\]
By hypothesis this two-form is degenerate at some point of the boundary.  Thus no admissible conformal factor produces a smooth limiting two-form that is non-degenerate at every boundary point, and the family is not conformally regularisable.
\end{proof}

\begin{remark}
The proposition above shows that a single conformal factor cannot make the original symplectic structure extend as an ordinary non-degenerate symplectic form when the leading weighted part is degenerate. However,  it is possible that  one obtains a degenerate Poisson structure, a singular symplectic structure, or a presymplectic structure on the boundary. 
\end{remark}

\section{Examples}
\label{sect: examples}

\subsection{McGehee regularisation for homogeneous central problems}\label{sec:mcgehee}

We first consider a finite-dimensional mechanical system with a homogeneous attractive singularity.  Let
$
        \q=r\s,
$ with $r>0,$
and  $\s\in S^{n-1}$ 
with $n\geq1$, and assume that the potential is homogeneous of degree $(-\alpha)$ (for Newtonian gravity, $\alpha=1$):
\begin{equation}
        V(r,\s)=-r^{-\alpha}U(\s),
        \qquad U(\s)>0.
\end{equation}
The Hamiltonian of the associated central force problem is
\begin{equation}
        H(r,\s,p_r,p_\s)
        =\frac12\left(p_r^2+\frac{|p_\s|^2}{r^{2}}\right)-r^{-\alpha}U(\s).
\end{equation}
%
%
McGehee variables are introduced by scaling the momenta as
\begin{equation}
        v=r^{\alpha/2}p_r,
        \qquad
        \u=r^{\alpha/2-1}p_\s,
\end{equation}
and so
\begin{equation}
        H=r^{-\alpha}\left[\frac12\left(v^2+|\u|^2\right)-U(\s)\right].
\end{equation}
On the energy level $H=h$ this becomes
\begin{equation}
        \frac12\left(v^2+|\u|^2\right)-U(\s)=h r^\alpha.
\end{equation}
Thus the blown-up collision boundary $r=0$ carries the collision manifold
\begin{equation}
        \mcC
        =\left\{r=0,\quad \frac12\left(v^2+|\u|^2\right)=U(\s)\right\}.
\end{equation}
The McGehee time change has the form
\begin{equation}
        \d t=r^{1+\alpha/2}\d\tau.
\end{equation}
Note that  in the domain $r>0$ the dynamics loses its Hamiltonian character with respect to the symplectic form $\omega$: here $g=r^{1+\alpha/2}$, so $\d g$ is proportional to $\d r$, whereas $\d H$ is not, and hence generically $\d g\wedge\d H\neq0$.  By Proposition~\ref{prop:time-change-ham}, the $\tau$-flow is not locally Hamiltonian for the original symplectic form.  Moreover, the boundary behaviour discussed below is a  stronger phenomenon: the symplectic form itself fails to extend.

In the new time $\tau$, the vector field extends to $r=0$.  The radial equation takes the form
\begin{equation}
        r'=rv,
\end{equation}
where prime denotes differentiation with respect to $\tau$.  The rescaled radial velocity satisfies, on the collision manifold,
\begin{equation}
        v'=\left(1-\frac{\alpha}{2}\right)|\u|^2.
\end{equation}
Hence for $0<\alpha<2$ we have $v'\geq0.$
The function $v$ is therefore monotone along non-equilibrium orbits on $\mcC$, which gives the collision dynamics its gradient-like character.

Notice that the symplectic form does not extend regularly to the boundary in these coordinates. Indeed,
the canonical one-form  $ \theta=p_r\d r+p_\s\cdot\d\s$
becomes in McGehee variables
\begin{equation}
        \theta
        =r^{-\alpha/2}v\,\d r
        +r^{1-\alpha/2}\u\cdot\d\s.
\end{equation}
Therefore $\omega=\d\theta$
contains terms of different $r$-weights.  Explicitly,
\begin{align*}
        \omega
        & = \d(r^{-\alpha/2}v)\wedge\d r
        + \d(r^{1-\alpha/2}\u)\wedge\d\s  \\
        & = r^{-\alpha/2}\d v\wedge\d r
        +r^{1-\alpha/2}\d\u\wedge\d\s
        +\left(1-\frac{\alpha}{2}\right)r^{-\alpha/2}\d r\wedge(\u\cdot\d\s),
\end{align*}
where the notation suppresses the standard tensorial identifications on the cotangent bundle of the sphere.
To match the notation of Proposition~\ref{prop:anisotropic-obstruction}, with $\lam=r$, we group the terms containing $\d r$ and write
\begin{equation}
        \omega
        =r^{1-\alpha/2}\,\d\u\wedge\d\s
        +\d r\wedge
        \left[r^{-\alpha/2}
        \left(-\d v+\left(1-\frac{\alpha}{2}\right)\u\cdot\d\s\right)\right].
\end{equation}
The nonzero components thus carry the two distinct weights
\begin{equation}
        r^{1-\alpha/2}
        \qquad\text{and}\qquad
        r^{-\alpha/2},
\end{equation}
attached, respectively, to the shape component $\d\u\wedge\d\s$ and to the radial component $\d r\wedge\bigl(-\d v+(1-\alpha/2)\,\u\cdot\d\s\bigr)$.  Since $\alpha>0$, the minimal exponent is $c=-\alpha/2$, carried by the radial component alone.

\begin{proposition}\label{prop:mcgehee-degenerate}
Let $n\geq2$.  For the homogeneous Hamiltonian above, the pullback of the canonical symplectic form to McGehee variables is not conformally regularisable at $r=0$, in the sense of Definition~\ref{def:admissible}, by any admissible conformal factor depending only on $r$.  In particular, after the natural scalar normalisation $r^{\alpha/2}\omega$ that keeps the radial part finite, the limiting two-form on the boundary is
\[
        \left.\bigl(r^{\alpha/2}\omega\bigr)\right|_{r=0}
        =\d r\wedge\left(-\d v+\left(1-\frac{\alpha}{2}\right)\u\cdot\d\s\right),
\]
which has rank two and is therefore degenerate on the $2n$-dimensional blown-up phase space.
\end{proposition}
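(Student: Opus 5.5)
The plan is to apply Proposition~\ref{prop:anisotropic-obstruction} with $\lam=r$ to the grouped expression for $\omega$ displayed just before the statement. First I check the hypotheses. The family is
\[
\omega=r^{1-\alpha/2}\,\omega_1+\d r\wedge r^{-\alpha/2}\eta_1,\qquad
\omega_1=\d\u\wedge\d\s,\qquad
\eta_1=-\d v+\left(1-\frac{\alpha}{2}\right)\u\cdot\d\s .
\]
Here $N=M=1$, $a_1=1-\alpha/2$ and $b_1=-\alpha/2$. The forms $\omega_1$ and $\eta_1$ are built from $(\s,v,\u)$ only, so they are smooth up to $r=0$, independent of $r$, and free of $\d r$. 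Neither vanishes identically on $\{r=0\}$. This is clear for $\eta_1$, because of the $-\d v$ term. For $\omega_1$ it uses $n\geq2$: then $S^{n-1}$ has positive dimension, so the shape block $\d\u\wedge\d\s$ (the canonical form on $T^*S^{n-1}$ in these coordinates) is genuinely nonzero. Since $\alpha>0$, the minimal exponent is $c=b_1=-\alpha/2<a_1$, so the leading weighted part is $\omega_0^{\mathrm{lead}}=\d r\wedge\eta_1$ alone.

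Next I show that $\omega_0^{\mathrm{lead}}$ is degenerate at every boundary point. It is the wedge of two one-forms, $\d r$ and $\eta_1$, which are linearly independent because $\eta_1$ contains $\d v$ and no $\d r$. Hence it is decomposable and nonzero, so it has rank exactly two. The blown-up phase space $(r,v,\s,\u)$ has dimension $2n\geq4$, so a rank-two form is degenerate. Its kernel is $(2n-2)$-dimensional and contains, for example, every vector tangent to the $\u$-directions, on which both $\d r$ and $\eta_1$ vanish. Proposition~\ref{prop:anisotropic-obstruction} then gives non-regularisability for every admissible factor $f(r)=r^{-\tilde c}g(r)$. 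This covers all admissible factors depending only on $r$.

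For the explicit formula, I take $\tilde c=c=-\alpha/2$ and $g\equiv1$. This gives
\[
r^{\alpha/2}\omega=r\,\d\u\wedge\d\s+\d r\wedge\eta_1 .
\]
This extends smoothly to $r=0$, and setting $r=0$ in the coefficient kills the first term. What remains is the stated limit $\d r\wedge\bigl(-\d v+(1-\alpha/2)\u\cdot\d\s\bigr)$, whose rank-two degeneracy was shown above.

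The only delicate point is the tensorial bookkeeping hidden in $\d\u\wedge\d\s$, since $\u$ lives in the cotangent fibre of the sphere. One has to confirm that, in the local chart on $T^*S^{n-1}$, the expression is a genuine nonzero two-form without $\d r$, and that the scaling $\u=r^{\alpha/2-1}p_\s$ produces no hidden $\d r$ contributions beyond the one already collected into $\eta_1$. I would verify this in local coordinates $\s=\s(x)$ with $p_\s=\sum_i p_i\,\d x^i$, setting $u_i=r^{\alpha/2-1}p_i$ and recomputing $\d\theta$ directly. This step also explains why $n\geq2$ is required: for $n=1$ the shape part is absent and the structure is two-dimensional.
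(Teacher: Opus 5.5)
Your proposal is correct and follows the paper's proof essentially step for step. Both apply Proposition~\ref{prop:anisotropic-obstruction} with $\lam=r$, $a_1=1-\alpha/2$ and $b_1=-\alpha/2$, identify the leading part $\d r\wedge\eta_1$ as a decomposable rank-two form (since $\eta_1$ contains $-\d v$), conclude degeneracy from $2n\geq4$, and read off the boundary limit using the factor $r^{\alpha/2}$; your additions (the explicit kernel containing the $\u$-directions, the use of $n\geq2$ to ensure $\d\u\wedge\d\s\not\equiv0$, and the proposed local-coordinate check of $\d\theta$) are harmless refinements of details the paper leaves implicit.
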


\begin{proof}
The displayed decomposition of $\omega$ satisfies the hypotheses of Proposition~\ref{prop:anisotropic-obstruction} with $\lam=r$, one component $\omega_1=\d\u\wedge\d\s$ of exponent $a_1=1-\alpha/2$, and one component $\d r\wedge\eta_1$ with $\eta_1=-\d v+(1-\alpha/2)\,\u\cdot\d\s$ of exponent $b_1=-\alpha/2$.  Neither form involves $\d r$, neither vanishes along the boundary, and the exponents are distinct since $\alpha>0$.  The minimal exponent is $c=-\alpha/2$, and the leading part is
\[
        \omega_0^{\mathrm{lead}}
        =\d r\wedge\eta_1 .
\]
This two-form is decomposable, hence of rank two at every point where $\eta_1$ is linearly independent of $\d r$, which holds everywhere since $\eta_1$ contains $-\d v$.  As the blown-up phase space has dimension $2n\geq4$, the leading part is degenerate at every boundary point.  Proposition~\ref{prop:anisotropic-obstruction} then shows that no admissible conformal factor produces a smooth nondegenerate limiting two-form.  Multiplying by $r^{\alpha/2}$ realises the minimal-exponent normalisation and gives the displayed boundary limit.
\end{proof}

\begin{remark}
Equivalently, one may look at the Poisson tensor.  Since the symplectic form has singular components in the McGehee variables, the inverse Poisson tensor acquires powers of $r$ that vanish in some directions as $r\to0$.  The limiting boundary structure is therefore degenerate rather than symplectic.  This is consistent with the appearance of a monotone function on the collision manifold.
\end{remark}

\begin{remark}
For $n=1$ (the collinear case), the angular variables are absent, the pullback of $\omega$ reduces to the single-weight component $r^{-\alpha/2}\,\d v\wedge\d r$, and multiplication by $r^{\alpha/2}$ regularises it: only one weight occurs, so the anisotropic mechanism is absent.  Compare also the exceptional two-dimensional behaviour noted after Proposition~\ref{prop:conformal-symplectic}.\end{remark}

\begin{remark}\label{rem:alpha-two}
The exponent $\alpha=2$ is distinguished in both geometry and dynamics.  Geometrically, the coefficient $1-\alpha/2$ annihilates the mixed term, the shape weight $r^{1-\alpha/2}$ becomes regular, and the pullback reduces to
\[
        \omega=\frac{1}{r}\,\d v\wedge\d r+\d\u\wedge\d\s,
\]
which is a $b$-symplectic form: the pole is of first order and is confined to the radial block.  This is consistent with \cite{MirandaPlanas2022}, where a $b$-symplectic structure for the double-collision McGehee change is obtained precisely for $\alpha=2$.  Proposition~\ref{prop:mcgehee-degenerate} is not contradicted: the form is still not conformally regularisable, since the $b$-model rescales only the $\d r$-direction rather than the whole form.  Dynamically, the same coefficient governs $v'=(1-\alpha/2)|\u|^2$, so $v$ is monotone on the collision manifold for every $\alpha\neq2$ and the monotonicity disappears exactly at $\alpha=2$; this is also the exponent at which the centrifugal term $r^{-2}|p_\s|^2$ and the attractive potential $r^{-\alpha}U(\s)$ scale identically.  The gradient-like boundary behaviour and the absence of a $b$-symplectic model thus occur for precisely the same exponents.
\end{remark}

\subsection{Focusing nonlinear Schr\"odinger equation (NLS): a formal infinite-dimensional analogue}\label{sec:nls}

We now discuss focusing NLS blow-up as a formal infinite-dimensional analogue of a McGehee-type regularisation and show that the same anisotropic boundary mechanism appears also in this  setting.
  In this respect, we use the standard formal symplectic form on NLS phase space. Further,  we calculate how the symplectic form  scales under the renormalised variables used in the blow-up analysis in \cite{Merle1993,MerleRaphael2005}. 
  
  \medskip
Consider the focusing NLS
\begin{equation}
        i\psi_t+\Delta\psi+|\psi|^{p-1}\psi=0,
        \qquad \psi:\R\times\R^d\to\C,
\end{equation}
with $p>1$.
It is Hamiltonian with symplectic form
\begin{equation}
        \Om_\psi(\xi,\eta)
        =\operatorname{Im}\int_{\R^d}\xi\overline{\eta}\,\d\x,
\end{equation}
and Hamiltonian functional
\begin{equation}
        \mcH(\psi)
        =\int_{\R^d}\left(\frac12|\nabla\psi|^2
        -\frac{1}{p+1}|\psi|^{p+1}\right)\d\x.
\end{equation}
Near a concentrating solution one introduces 
$\lam(t)>0,$ 
 $\gamma(t)\in\R,$  and 
        $\x(t)\in\R^d$,
and imposes the ansatz
\begin{equation}
        \psi(t,\x)
        =\lam(t)^{-a}e^{i\gamma(t)}v(s,\y),
        \qquad
        \y=\frac{\x-\x(t)}{\lam(t)},
        \qquad
        a=\frac{2}{p-1}.
        \label{eq: a}
\end{equation}
Here $\lam(t)>0$, $\gamma(t)\in\R$ and $\x(t)\in\R^d$ are the
modulation parameters: $\lam$ is the concentration scale, $\gamma$
the phase parameter, so that $e^{i\gamma(t)}$ is the phase factor
multiplying the profile, and $\x(t)$ the translation parameter
marking the concentration point.  They correspond, respectively, to
the scaling, gauge, and translation symmetries of the NLS.  The function
$v(s,\y)$ is the renormalised profile, written in the rescaled time
$s$ and the rescaled spatial variable $\y$.

\medskip
The renormalised time is
\begin{equation}
        \frac{\d s}{\d t}=\frac{1}{\lam(t)^2}.
\end{equation}
Notice that the variable $\lam$ is the analogue of the celestial mechanics collision radius $r$ with the blow-up corresponding  to
$\lam(t)\to 0.$
Next, we apply  the profile scaling with $\lam$ fixed and the phase and translation suppressed:
\begin{equation}
        \xi_\psi=\lam^{-a}\xi_v,
        \qquad
        \eta_\psi=\lam^{-a}\eta_v,
        \qquad
        \d\x=\lam^d\d\y.
\end{equation}
Thus
\begin{equation}
        \Om_\psi(\xi_\psi,\eta_\psi)
        =\lam^{d-2a}\Om_v(\xi_v,\eta_v)\,.
\end{equation}
%
Thus the  scaling is conformally symplectic.  In the case when $\displaystyle{a=\frac{d}{2}}$ (that is, the functional $L^2$-critical case; see \cite{MerleRaphael2005}), the exponent vanishes and the pure profile scaling preserves the symplectic form.

However, if one includes   profile variation and modulation directions such as
\begin{equation}
        \delta\psi
        =\lam^{-a}e^{i\gamma}
        \left(
        \delta v
        +i v\,\delta\gamma
        -\left(a v+\y\cdot\nabla v \right)\,\frac{\delta\lam}{\lam}
        -\nabla v\cdot\frac{\delta\x}{\lam}
        \right),
\end{equation}
then the symplectic form yields components with different powers of $\lam$.  Relative to the common prefactor $\lam^{-a}e^{i\gamma}$, the profile direction $\delta v$ and the phase direction $iv\,\delta\gamma$ carry no additional power of $\lam$, whereas the scale direction $-\left(a v+\y\cdot\nabla v \right)\,\delta\lam/\lam$ and the translation directions $-\nabla v\cdot\delta\x/\lam$ each carry an additional factor $1/\lam$.  Pairing the directions in $\Om_\psi$ therefore produces the following weights:
\[
\begin{array}{c|c}
\text{pairing} & \text{weight} \\
\hline
\text{profile--profile, profile--phase} & \lam^{d-2a} \\
\text{profile--scale, profile--translation, phase--scale, phase--translation} & \lam^{d-2a-1} \\
\text{scale--translation, translation--translation} & \lam^{d-2a-2}
\end{array}
\]
Thus, even when the profile scaling is symplectic, the extended modulation geometry involves several distinct powers of $\lam$ among the non-vanishing components, falling, formally, within the scope of Proposition~\ref{prop:anisotropic-obstruction}.

\begin{remark}
Some individual modulation pairings may vanish identically for special profiles, so that the precise rank of each weighted component depends on the case at hand.  For instance, the phase--scale pairing is
\begin{equation}
        \Om_\psi\bigl(iv,\;-(a v+\y\cdot\nabla v)\bigr)
        \;\propto\;
        -\operatorname{Re}\int_{\R^d} v\,\overline{(a v+\y\cdot\nabla v)}\,\d\y
        =\left(\frac d2-a\right)\|v\|_{L^2}^2,
\end{equation}
which vanishes exactly in the $L^2$-critical case $a=d/2$.
\end{remark}

In the variables $(s,\y)$, the renormalised NLS equation reads schematically:
\begin{equation}
        i v_s+\Delta_\y v+|v|^{p-1}v
        -i b(s)\,(a v+\y\cdot\nabla v)
        +\text{phase and translation terms}=0,
\end{equation}
where $\displaystyle{  b(s)=-\frac{\lam_s}{\lam}.}$
The dilation term comes from the moving singular scale.



\medskip
In conclusion, for the renormalised variables of the focusing NLS above, the profile scaling is conformally symplectic with conformal factor $\displaystyle{   \lam^{d-\frac{4}{p-1}}}.$
However, when phase, translation, and scale modulation directions are included, the formal pulled-back symplectic form contains several distinct powers of $\lam$.  Hence, in the sense of Proposition~\ref{prop:anisotropic-obstruction}, applied formally, the full modulation geometry is generally singular and non-conformal at the boundary $\lam=0$.

\medskip


\subsection{Comparison of the two examples} \label{sec:comparison}

The analogy between McGehee blow-up and NLS renormalisation may be summarised as follows:
\[
\begin{array}{c|c|c}
\text{Feature} & \text{McGehee} & \text{focusing NLS} \\
\hline
\text{singular scale} & r\to0 & \lam(t)\to0 \\
\text{singular phenomenon} & \text{collision} & \text{concentration/blow-up} \\
\text{new spatial variable} & \q=r\s & \y=(\x-\x(t))/\lam(t) \\
\text{new time} & \d t=r^{1+\alpha/2}\d\tau & \d s/\d t=\lam^{-2} \\
\text{boundary} & r=0 & \lam=0 \\
\text{limiting dynamics} & \text{collision manifold} & \text{profile/modulation dynamics} \\
\text{geometric issue} & \omega \text{ has distinct } r\text{-weights} & \Om \text{ has distinct } \lam\text{-weights}
\end{array}
\]

While the two examples are not identical, they share a common mechanism: in both settings, the singular transformation is chosen to facilitate the understanding of the limiting dynamics.  It is not chosen to make the original Hamiltonian geometry extend smoothly and non-degenerately to the singular boundary.  The obstruction is anisotropic scaling.

\section{Concluding remarks}\label{sec:conclusions}

Singular regularisations in Hamiltonian systems involve two separate questions:
\begin{enumerate}[label=(\roman*)]
\item Does the vector field, or the relevant profile dynamics, extend to a singular boundary after blow-up and time rescaling?
\item Does the Hamiltonian geometric structure extend to that boundary as a smooth non-degenerate symplectic structure?
\end{enumerate}
The answer to the first question may be \textit{yes} while the answer to the second is \textit{no}.
The finite-dimensional criterion in Proposition~\ref{prop:anisotropic-obstruction} provides a simple reason.  Near the singular boundary, the induced two-form may split into components with different scale weights.  If the leading weighted part is degenerate, no scalar conformal factor can produce a smooth non-degenerate limiting two-form.  The McGehee coordinates for homogeneous collision and renormalised coordinates for focusing NLS both exhibit this behaviour.
This perspective  clarifies why Hamiltonian systems may produce non-Hamiltonian-looking boundary dynamics after singular regularisation.  The Hamiltonian structure remains present in the interior, but the boundary dynamics is governed by a limiting, often degenerate, geometric structure.  Thus regularisation of singular dynamics and preservation of ordinary Hamiltonian geometry are distinct requirements.

\medskip
Possible extensions include a more systematic treatment of anisotropic singular symplectic structures near collision boundaries, complementing the isotropic $b^m$-symplectic and folded models of \cite{GuilleminMirandaPires2014,Scott2016,DelshamsKiesenhoferMiranda2017},  and a rigorous infinite-dimensional version of the formal NLS calculation. 
Another possible  interesting future direction is the study of singularities of Hamiltonian PDEs that admit multisymplectic formulations.  Recall that a first-order multisymplectic PDE in one space dimension can often be written as
\begin{equation}
        K z_t+L z_x=\nabla S(z),
\end{equation}
where $K$ and $L$ are constant skew-symmetric matrices.  The associated two-forms are
$
        \omega=\frac12\d z\wedge K\d z,
$ and 
        $\kappa=\frac12\d z\wedge L\d z,
$
and solutions satisfy the local conservation law
$
        \partial_t\omega+\partial_x\kappa=0.
$
This framework applies to many dispersive Hamiltonian PDEs, including NLS and KdV-type equations \cite{Bridges1997,BridgesReich2001,MarsdenPatrickShkoller1998}.
A regular change of spacetime variables pulls back the multisymplectic structure.  Blow-up coordinates, however, rescale time and space by different powers of the singular scale, so the two forms $\omega$ and $\kappa$ acquire different weights, and the same anisotropic mechanism is expected.
 The generalised KdV equation provides another natural Hamiltonian PDE example: its blow-up variables introduce a concentrating scale and a renormalised time, and the Gardner symplectic form acquires powers of that scale \cite{MartelMerle2002,MartelMerleRaphael2013}.

 \medskip
 The problems above suggest that anisotropic boundary scaling is a general feature of Hamiltonian singularity analysis and a  useful tool for understanding the dynamics near singularities worth of further explorations.

\section*{Acknowledgements}
The author was supported by an NSERC Discovery Grant. The author used the LLMs ChatGPT (OpenAI) and Claude (Anthropic) to assist with copy-editing and with restructuring parts of the exposition. All mathematical ideas, results, and verifications are due to the author, who retains full responsibility for the correctness and originality of the manuscript.

\end{document}